\newtheorem{theorem}{Theorem}[section]
\newtheorem{lemma}[theorem]{Lemma}
\newtheorem{remark}[theorem]{Remark}
\begin{document}
\begin{center}
\huge{A proof of the continuous martingale convergence theorem}
\end{center}
\begin{center}
\large{Joe Ghafari}
\end{center}

\begin{center}
\textbf{Abstract}
\end{center}

A proof of the continuous martingale convergence theorem is provided. It relies on a classical martingale inequality and the almost sure convergence of a uniformly bounded non-negative super-martingale, after a truncation argument.
\\*
\\*
\textbf{Keywords:} Martingale convergence, super-martingale.
\\*
\textbf{2020 Mathematics Subject Classification}: 60G44.
\\
\\

\section{Introduction}
Unless otherwise stated, all processes in this paper are defined on a filtered probability space $\left(\Omega,\mathcal{F},\left(\mathcal{F}_r\right)_{r\in \mathbb{R}},\mathbb{P}\right).$
\\

The continuous martingale convergence theorem states the following:

\begin{theorem}
\label{th:convergence}
Let $\left(X_r\right)_{r \in \mathbb{R}}$ be a right-continuous integrable sub-martingale. 
\begin{enumerate}
\item If $\sup_{r \in \mathbb{R}_+}\mathbb{E}\left[\left|X_r\right|\right]<+\infty,$ then there exists an integrable random variable $X$ such that $\lim_{r \to +\infty} X_r=X$ a.s.
\item The limit $\lim_{r \to -\infty}X_r$ exists a.s. in $\left[-\infty,+\infty\right[.$
\end{enumerate}
\end{theorem}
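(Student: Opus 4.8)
The plan is to funnel both statements into one engine, namely the almost sure convergence of an arbitrary non-negative right-continuous supermartingale, and to treat the uniformly bounded case as the atomic input flagged in the abstract (obtained from the discrete-time theorem evaluated along a countable dense set of times, then lifted to the continuum by right-continuity). For Part 1 the first move is to strip the sign off $X$. Since $(X_r)$ is an $L^1$-bounded sub-martingale, the conditional expectations $\mathbb{E}[X_t\mid\mathcal{F}_r]$ increase in $t$ and are bounded in $L^1$, so $M_r:=\lim_{t\to+\infty}\mathbb{E}[X_t\mid\mathcal{F}_r]$ is a finite, $L^1$-bounded martingale with $M_r\ge X_r$; consequently $Z:=M-X\ge 0$ is a non-negative supermartingale. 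I would then invoke the Krickeberg decomposition to write the $L^1$-bounded martingale $M$ as a difference of two non-negative martingales. Since every non-negative martingale is in particular a non-negative supermartingale, the engine makes $M$ and $Z$ converge a.s., hence $X=M-Z$ converges a.s.; integrability of the limit follows from Fatou and the standing $L^1$ bound.

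The heart of the reduction is the engine itself, and here the truncation and the classical inequality do the work. Given a non-negative supermartingale $Z$, the map $z\mapsto z\wedge K$ is concave and non-decreasing, so $(Z_r\wedge K)_{r}$ is again a supermartingale, now non-negative and bounded by $K$, and therefore converges a.s. by the bounded-case input. The classical martingale inequality enters to remove the truncation: the maximal inequality for non-negative supermartingales gives $\lambda\,\mathbb{P}(\sup_{r\ge0}Z_r\ge\lambda)\le\mathbb{E}[Z_0]$, whence $\sup_{r\ge0}Z_r<+\infty$ a.s. On the event $\{\sup_r Z_r<K\}$ one has $Z_r\wedge K=Z_r$ for every $r$, so $Z$ converges there; letting $K\to+\infty$ along the increasing events $\{\sup_r Z_r<K\}\uparrow\Omega$ yields a.s. convergence of $Z$. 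The same maximal inequality, applied to the non-negative sub-martingale $X^+$ and to the supermartingale $-X$, also secures $\sup_{r\ge0}|X_r|<+\infty$ a.s., which is what makes the localisation legitimate.

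For Part 2 I would reverse time, setting $\hat X_s:=X_{-s}$ and $\hat{\mathcal{G}}_s:=\mathcal{F}_{-s}$ for $s\ge0$, so that the left tail becomes the large-$s$ behaviour of a reversed sub-martingale: $\mathbb{E}[\hat X_s\mid\hat{\mathcal{G}}_{s'}]\ge\hat X_{s'}$ whenever $0\le s\le s'$. Because $\mathbb{E}[X_r]$ is non-decreasing in $r$, the expectations $\mathbb{E}[\hat X_s]$ are non-increasing and bounded above by $\mathbb{E}[X_0]$. The crucial point is to exclude a limit $+\infty$: for $s\ge s_0$ one has $\hat X_s^+\le\mathbb{E}[\hat X_{s_0}^+\mid\hat{\mathcal{G}}_s]$, and conditional expectations of the fixed integrable variable $\hat X_{s_0}^+$ form a uniformly integrable family, so $(\hat X_s^+)_{s\ge s_0}$ is uniformly integrable and its almost sure limit is finite. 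Running the truncation-and-maximal-inequality argument on the reversed process (equivalently, on the non-negative supermartingale pieces built from it) then produces an a.s. limit, which may slide to $-\infty$ exactly when $\mathbb{E}[\hat X_s]\downarrow-\infty$; this is why the limit is asserted only in $[-\infty,+\infty[$.

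I expect the main obstacle to be the atomic input rather than the reductions: passing from the discrete-time convergence of a bounded non-negative supermartingale to the continuous parameter forces one to evaluate the process along a countable dense set, to control its oscillation between grid points using right-continuity, and to discard a single null set that works simultaneously for all truncation levels and all rational thresholds. The secondary delicate step is Part 2, where I must make the uniform integrability of $\hat X_s^+$ fully rigorous so as to genuinely rule out the value $+\infty$ while still allowing the $-\infty$ that the statement permits.
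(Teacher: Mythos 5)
Your overall architecture is the paper's: the uniformly bounded non-negative super-martingale case as the atom, truncation by $z\mapsto z\wedge K$ to reach general non-negative super-martingales, Krickeberg at $+\infty$, and domination of $X_r$ by the martingale $\mathbb{E}\left[X_0^+|\mathcal{F}_r\right]$ at $-\infty$; your Part 1 is correct as outlined (decomposing $M$ rather than $X$ is a harmless variant). The first genuine gap is that your method for removing the truncation does not survive the passage to $-\infty$. You undo $Z\wedge K$ via $\lambda\,\mathbb{P}(\sup Z\ge\lambda)\le\mathbb{E}\left[Z_0\right]$ and the localization $\{\sup Z<K\}\uparrow\Omega$. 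But for the piece $Z_r:=\mathbb{E}\left[X_0^+|\mathcal{F}_r\right]-X_r$, $r\le 0$, one has $\mathbb{E}\left[Z_r\right]=\mathbb{E}\left[X_0^+\right]-\mathbb{E}\left[X_r\right]$, which is unbounded as $r\to-\infty$ exactly when $\mathbb{E}\left[X_r\right]\downarrow-\infty$ --- the case the theorem must allow. No maximal inequality then yields $\sup_{r\le 0}Z_r<+\infty$ a.s., the events $\{\sup Z<K\}$ do not exhaust $\Omega$, and on the residual event you have shown nothing: $Z$ could a priori oscillate with $\liminf<\limsup=+\infty$, i.e.\ $X_r$ could oscillate with $\liminf=-\infty$, so the limit in $[-\infty,+\infty[$ would not exist. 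The paper's de-truncation avoids this: from the a.s.\ convergence of $Z_r\wedge q$ for every $q\in\mathbb{N}$ it deduces $\min\left(q,\limsup Z\right)=\min\left(q,\liminf Z\right)$ a.s.\ for all $q$, hence $\limsup Z=\liminf Z$ in $\overline{\mathbb{R}}_+$, with $+\infty$ a legitimate limit. You should replace your localization by this identity (it also makes the maximal inequality unnecessary in your Part 1).

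The second gap is the atomic input itself, which you leave unresolved and whose sketch points at the wrong tool. Right-continuity does not control the oscillation of the process between integer grid points; it only upgrades convergence along a countable dense set to convergence along $\mathbb{R}$, which is the trivial step. The real work --- and the place where inequality \eqref{eq:I} is actually deployed in the paper --- is passing from the discrete theorem along $\mathbb{N}$ to convergence along all of $\mathbb{Q}$: one applies \eqref{eq:I} to the super-martingale $\left(Y_{r+k}-Y_k\right)_{r\ge 0}$ to bound $\mathbb{P}\left(\sup_{r\in\mathbb{Q}\cap[k,+\infty[}\left|Y_r-Y\right|>\delta\right)$ by a multiple of $\mathbb{E}\left[\left|Y_k-Y\right|\right]$, which tends to $0$. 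Moreover, in the reversed-time direction even the bounded discrete case is nontrivial, since the discrete martingale convergence theorem does not apply to $\left(Y_{-k}\right)_{k\in\mathbb{N}}$; the paper handles it with a compensator $V_k$ turning $\left(Y_k+V_{-k}\right)_{k\in\mathbb{Z}_-}$ into a reversed martingale, plus an $L^1$-convergence lemma for $\mathbb{E}\left[\,\cdot\,|\mathcal{F}_{-k}\right]$. Your uniform-integrability remark correctly rules out the value $+\infty$ for $\limsup X_r$ at $-\infty$, but it does not substitute for either of these missing arguments.
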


The discrete version of this result has various proofs (see \cite{bib2}, \cite{bib4}, \cite{bib5}, \cite{bib7}, and \cite{bib11}). However, there doesn't seem to exist in the literature other
proofs than Doob's original proof of Theorem \ref{th:convergence}. The latter uses the up-crossing inequality (for more details, see \cite{bib8} and \cite{bib10}). 

In this paper we present a new elementary proof of theorem \ref{th:convergence}, avoiding the usual up-crossing lemma and using the classical inequality: 
\begin{equation}
\label{eq:I}
\delta \mathbb{P}\left(\sup_{r \in \mathbb{Q} \cap \left[u,u+v\right]}\left|Y_r\right|>\delta\right) \leq \mathbb{E}\left[\left|Y_{u}\right|\right]+2\mathbb{E}\left[\left|Y_{u+v}\right|\right], 
\end{equation}
where $(u,v,\delta) \in \mathbb{R} \times \left(\mathbb{R}_{+}\right)^2$ and $\left(Y_r\right)_{r \in \mathbb{R}}$ is an integrable super-martingale. 
\\*
The discrete martingale convergence theorem is also needed along with the following theorem due to Krickeberg.

\begin{theorem}[Krickerberg decomposition]
\label{th:Krickeberg}
Let $\left(Y_r\right)_{r \in \mathbb{R}_+}$ be an integrable sub-martingale for which $\sup_{r \in \mathbb{R}_+}\mathbb{E}\left[\left|Y_r\right|\right]<+\infty$. Then there exist a non-negative martingale $\left(U_{r}\right)_{r \in \mathbb{R}_+}$ and a non-negative super-martingale $\left(W_{r}\right)_{r \in \mathbb{R}_+}$ such that $\sup_{r \in \mathbb{R}_+}\mathbb{E}\left[\left|U_r\right|\right]<+\infty,\sup_{r \in \mathbb{R}_{+}}\mathbb{E}\left[\left|W_r\right|\right]<+\infty,$ and for all $r \in \mathbb{R}_+,Y_r = U_r - W_r.$ 
\end{theorem}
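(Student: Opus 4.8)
The plan is to build the martingale part $U$ explicitly as an increasing limit of conditional expectations of the positive parts $Y_r^+ = \max(Y_r,0)$, and then to set $W_r := U_r - Y_r$. First I would note that, since $x \mapsto x^+$ is convex and non-decreasing, Jensen's inequality shows that $\left(Y_r^+\right)_{r \in \mathbb{R}_+}$ is again a sub-martingale; hence, for fixed $s$, the family $t \mapsto \mathbb{E}\left[Y_t^+ \mid \mathcal{F}_s\right]$ is non-decreasing in $t \ge s$ (a.s.) and bounded in $L^1$ by $\sup_{r}\mathbb{E}\left[\left|Y_r\right|\right]$. By the monotone convergence theorem, the limit
\begin{equation*}
U_s := \lim_{t \to +\infty} \mathbb{E}\left[Y_t^+ \mid \mathcal{F}_s\right]
\end{equation*}
exists a.s., is $\mathcal{F}_s$-measurable, non-negative, and integrable, with $\mathbb{E}\left[U_s\right] = \lim_{t \to +\infty}\mathbb{E}\left[Y_t^+\right] \le \sup_{r}\mathbb{E}\left[\left|Y_r\right|\right]$.

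Next I would verify that $\left(U_r\right)_{r \in \mathbb{R}_+}$ is a martingale. For $s \le s'$, the essential point is to interchange the conditional expectation $\mathbb{E}\left[\,\cdot \mid \mathcal{F}_s\right]$ with the increasing limit defining $U_{s'}$; the conditional monotone convergence theorem legitimises this, and the tower property then collapses $\mathbb{E}\left[\mathbb{E}\left[Y_t^+ \mid \mathcal{F}_{s'}\right] \mid \mathcal{F}_s\right] = \mathbb{E}\left[Y_t^+ \mid \mathcal{F}_s\right]$, whose limit is exactly $U_s$. Non-negativity of $U$ is immediate from the construction, and its $L^1$-boundedness follows from the uniform expectation bound above, since a martingale has constant expectation.

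Finally, setting $W_r := U_r - Y_r$, I would observe that $W$ is non-negative because $U_r \ge \mathbb{E}\left[Y_r^+ \mid \mathcal{F}_r\right] = Y_r^+ \ge Y_r$, the limit dominating the $t = r$ term. Since $U$ is a martingale and $-Y$ is a super-martingale, their sum $W$ is a super-martingale, and the estimate $\mathbb{E}\left[\left|W_r\right|\right] = \mathbb{E}\left[U_r\right] - \mathbb{E}\left[Y_r\right] \le 2\sup_{r}\mathbb{E}\left[\left|Y_r\right|\right]$ yields the required $L^1$-bound. I expect the main obstacle to be the careful justification of the limit interchange in the martingale step, namely applying conditional monotone convergence to a family indexed by the continuous parameter $t$, rather than any deeper structural difficulty.
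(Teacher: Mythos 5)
Your construction is correct, and it is the standard proof of the Krickeberg decomposition; the paper itself does not prove this theorem but simply cites Dellacherie--Meyer, Karatzas--Shreve, and Revuz--Yor, where essentially this argument appears. The one point you flag --- interchanging $\mathbb{E}\left[\,\cdot\mid\mathcal{F}_s\right]$ with the increasing limit over the continuous parameter $t$ --- is handled by fixing a countable sequence $t_n \uparrow +\infty$, along which $\left(\mathbb{E}\left[Y_{t_n}^+\mid\mathcal{F}_s\right]\right)_{n}$ is a.s.\ non-decreasing so that conditional monotone convergence applies directly; the resulting limit is independent of the chosen sequence by interleaving, since the family is monotone in $t$ up to null sets. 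With that reduction made explicit, every step (Jensen for $x\mapsto x^+$, the tower property, non-negativity of $W_r = U_r - Y_r$ via $U_r \geq Y_r^+$, and the $L^1$ bounds) goes through as you describe.
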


The reader is referred to \cite{bib8}, \cite{bib10}, and \cite{bib9} for a proof of inequality \eqref{eq:I} and Theorem \ref{th:Krickeberg}.

Lastly, we note that our approach gives a new proof of the theorem on the convergence of an integrable reversed discrete sub-martingale (see \cite{bib1} and \cite{bib6} for a general statement and a proof using the up-crossing lemma). 

\section{Preliminary tools}

In this section we present the necessary tools for our proof of Theorem \ref{th:convergence}.
\\

The following lemma is crucial for the proof of Theorem \ref{th:uniform} below.

\begin{lemma}
\label{lemma}
If $Y$ is an integrable random variable, then $\left(\mathbb{E}\left[Y|\mathcal{F}_{-k}\right]\right)_{k \in \mathbb{N}}$ converges to $\mathbb{E}\left[Y|\bigcap_{k \in \mathbb{N}}\mathcal{F}_{-k}\right]$ in $L^1.$
\end{lemma}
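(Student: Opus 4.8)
The plan is to recognise $(M_k)_{k\in\mathbb{N}}$, where $M_k := \mathbb{E}[Y|\mathcal{F}_{-k}]$, as a reversed martingale and to establish its $L^1$ convergence through a Hilbert-space argument combined with a density reduction, so as to stay within elementary tools and in particular avoid invoking the convergence of reversed discrete martingales (which is precisely what this approach is meant to reprove). First I would note that, the filtration being increasing in $r$, the family $(\mathcal{F}_{-k})_{k\in\mathbb{N}}$ is decreasing, and I set $\mathcal{G} := \bigcap_{k\in\mathbb{N}}\mathcal{F}_{-k}$. Since conditional expectation is an $L^1$-contraction, for any bounded $Y'$ and any $k$ one has $\|\mathbb{E}[Y|\mathcal{F}_{-k}]-\mathbb{E}[Y'|\mathcal{F}_{-k}]\|_{L^1}\le\|Y-Y'\|_{L^1}$, and likewise with $\mathcal{G}$ in place of $\mathcal{F}_{-k}$. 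A three-term triangle inequality then reduces the claim to the case where $Y$ is bounded, hence square-integrable.

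In the $L^2$ setting I would use that $M_k$ is the orthogonal projection of $Y$ onto the closed subspace $L^2(\mathcal{F}_{-k})$. By the tower property $M_{k+1}=\mathbb{E}[M_k|\mathcal{F}_{-(k+1)}]$, so for $j\ge k$ the vector $M_k-M_j$ is orthogonal to $L^2(\mathcal{F}_{-j})$, and in particular to $M_j$; Pythagoras then gives $\|M_k-M_j\|_{L^2}^2=\|M_k\|_{L^2}^2-\|M_j\|_{L^2}^2$. Because $(\|M_k\|_{L^2})_{k}$ is nonincreasing (conditional expectation is also an $L^2$-contraction) and bounded below, it converges, so $(M_k)_k$ is Cauchy in $L^2$ and admits an $L^2$-limit $M_\infty$.

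It remains to identify $M_\infty$ with $\mathbb{E}[Y|\mathcal{G}]$. Passing to an a.s.\ convergent subsequence, $M_\infty$ is the a.s.\ pointwise limit of $(M_{j_n})_n$; since $M_{j_n}$ is $\mathcal{F}_{-k}$-measurable whenever $j_n\ge k$, the limit $M_\infty$ is $\mathcal{F}_{-k}$-measurable for every $k$, hence $\mathcal{G}$-measurable. Moreover, for any $A\in\mathcal{G}\subseteq\mathcal{F}_{-k}$ the defining identity $\mathbb{E}[M_k\mathds{1}_A]=\mathbb{E}[Y\mathds{1}_A]$ passes to the $L^2$-limit to yield $\mathbb{E}[M_\infty\mathds{1}_A]=\mathbb{E}[Y\mathds{1}_A]$; together with $\mathcal{G}$-measurability this gives $M_\infty=\mathbb{E}[Y|\mathcal{G}]$. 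As $L^2$-convergence implies $L^1$-convergence on a probability space, the bounded case is settled, and the density reduction of the first step then delivers the general $L^1$ statement.

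I expect the main obstacle to be this identification step: ensuring that the $L^2$-limit is genuinely measurable with respect to the intersection $\mathcal{G}$ — not merely with respect to each $\mathcal{F}_{-k}$ up to differing null sets — and that the characterising integral identity survives the limit. The former is handled cleanly by the a.s.\ convergent subsequence, which produces a single $\mathcal{G}$-measurable version, and the latter by the $L^2$-continuity of the map $Z\mapsto\mathbb{E}[Z\mathds{1}_A]$.
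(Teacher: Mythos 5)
Your proposal is correct and follows essentially the same route as the paper: a Pythagoras/orthogonality argument showing $(\mathbb{E}[Y|\mathcal{F}_{-k}])_k$ is Cauchy in $L^2$ for square-integrable $Y$, identification of the limit via $\mathcal{G}$-measurability and the integral identity $\int_A M_\infty\,\mathrm{d}\mathbb{P}=\int_A Y\,\mathrm{d}\mathbb{P}$, and a truncation plus $L^1$-contraction argument to pass to general integrable $Y$. The only cosmetic difference is that you perform the reduction to the bounded case up front, whereas the paper treats the $L^2$ case first and then truncates.
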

\begin{proof}
For every $k \in \mathbb{N},$ let $H_k:=\mathbb{E}\left[Y|\mathcal{F}_{-k}\right].$ We fix $\mathcal{F}_{\infty}:=\bigcap_{q \in \mathbb{N}}\mathcal{F}_{-q}.$ 

We consider first the case where $Y \in L^2.$ We have
\begin{equation*}
\forall (q,k) \in \mathbb{N}^2,\mathbb{E}\left[\left(H_{k+q}-H_q\right)^2\right]=\mathbb{E}\left[H^2_q\right]-\mathbb{E}\left[H^2_{k+q}\right].    
\end{equation*}
We note that the sequence $\left(\mathbb{E}\left[H_k^2\right]\right)_{k \in \mathbb{N}}$ is non-increasing and bounded below, hence it converges in $\mathbb{R}.$ 
\\*
Consequently, there exists $H \in L^2$ such that $\lim_{k \to +\infty}\mathbb{E}\left[\left(H_k-H\right)^2\right]=0$ and $H$ is $\mathcal{F}_{\infty}$-measurable.
\\*
Since
\begin{equation*}
\forall G \in \mathcal{F}_{\infty},\int_{G}\!H\,\mathrm{d}\mathbb{P}=\lim_{q \to +\infty}\int_{G}\!H_{q}\,\mathrm{d}\mathbb{P}=\int_{G}\!Y\,\mathrm{d}\mathbb{P},
\end{equation*}
it follows that $H=\mathbb{E}\left[Y|\mathcal{F}_{\infty}\right]$ a.s.

Turning to the general case, we have for all $(k,q) \in \mathbb{N}^2,$
\begin{equation*}
\mathbb{E}\left[\left|H_k-\mathbb{E}\left[Y|\mathcal{F}_{\infty}\right]\right|\right] \leq 2\mathbb{E}\left[\left|Y-Y \mathds{1}_{\left\{\left|Y\right| \leq q\right\}}\right|\right]+\mathbb{E}\left[\left|\mathbb{E}\left[Y \mathds{1}_{\left\{\left|Y\right| \leq q\right\}}|\mathcal{F}_{-k}\right]-\mathbb{E}\left[Y \mathds{1}_{\left\{\left|Y\right| \leq q\right\}}|\mathcal{F}_{\infty}\right]\right|\right].
\end{equation*}
Considering $k \to +\infty$ and from the special case which we already proved, we deduce that for every $q \in \mathbb{N},$ 
\begin{equation*}
\limsup_{k \to +\infty}\mathbb{E}\left[\left|H_k-\mathbb{E}\left[Y|\mathcal{F}_{\infty}\right]\right|\right] \leq 2\mathbb{E}\left[\left|Y-Y \mathds{1}_{\left\{\left|Y\right| \leq q\right\}}\right|\right].
\end{equation*}
Applying the dominated convergence theorem, we conclude the proof.
\end{proof}

We provide next a theorem on the almost sure convergence of a uniformly bounded non-negative super-martingale.

\begin{theorem}
\label{th:uniform}
If $\left(Y_r\right)_{r \in \mathbb{R}}$ is an integrable super-martingale such that 
\begin{equation*}
\exists c \in \mathbb{R}_+, \forall r \in \mathbb{R},0 \leq Y_r \leq c,
\end{equation*}
then there exist integrable random variables $Y$ and $Y'$ such that $\lim_{r \to +\infty, r \in \mathbb{Q}}Y_r=Y$ a.s. and $\lim_{r \to -\infty,r \in \mathbb{Q}}Y_r=Y'$ a.s.
\end{theorem}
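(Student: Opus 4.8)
The plan is to work entirely along the countable set $\mathbb{Q}$ and to extract everything from \eqref{eq:I} through two auxiliary facts. First I would let $v\to+\infty$ in \eqref{eq:I}: since the events $\{\sup_{r\in\mathbb{Q}\cap[u,u+v]}|Y_r|>\delta\}$ increase with $v$, monotone convergence upgrades \eqref{eq:I} to a bound on $\sup_{r\in\mathbb{Q}\cap[u,+\infty)}|Y_r|$, and symmetrically to a bound on the left half-lines $\sup_{r\in\mathbb{Q}\cap(-\infty,u]}|Y_r|$. This gives (F1): a non-negative super-martingale $(P_r)$ with $\mathbb{E}[P_r]\to0$ as $r\to+\infty$ satisfies $\mathbb{P}(\sup_{r\in\mathbb{Q},\,r\ge m}P_r>\delta)\le\mathbb{E}[P_m]/\delta\to0$, and as this supremum is non-increasing in $m$ it tends to $0$ a.s.\ (the left-hand version being identical). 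It also gives (F2): for any fixed integrable $X$, the process $\mathbb{E}[X|\mathcal{F}_r]$ converges a.s.\ along $\mathbb{Q}$ as $r\to\pm\infty$. I would prove (F2) by approximation, writing $\mathbb{E}[X|\mathcal{F}_r]$ minus its candidate limit as the conditional expectation of a small $L^1$ error, dominating it by a non-negative martingale, and applying the half-line form of \eqref{eq:I}; on the right the required smallness of the error is the discrete martingale convergence theorem, while on the left it is exactly Lemma \ref{lemma}.

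For the right limit, the discrete martingale convergence theorem applied to the bounded non-negative super-martingale $(Y_n)_{n\in\mathbb{N}}$ gives an integrable, $\mathcal{F}_{+\infty}$-measurable $Y$ with $Y_n\to Y$ a.s.\ and, by boundedness, in $L^1$. Setting $M_r:=\mathbb{E}[Y|\mathcal{F}_r]$, the super-martingale inequality $\mathbb{E}[Y_s|\mathcal{F}_r]\le Y_r$ together with $L^1$-convergence gives $M_r\le Y_r$, so $P_r:=Y_r-M_r$ is a non-negative super-martingale with $\mathbb{E}[P_r]=\mathbb{E}[Y_r]-\mathbb{E}[Y]\to0$. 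Then (F2) gives $M_r\to Y$ a.s.\ and (F1) gives $P_r\to0$ a.s., whence $Y_r=M_r+P_r\to Y$ a.s.

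The left limit is the main obstacle: as $r\to-\infty$ the means $\mathbb{E}[Y_r]$ increase to some $\alpha\le c$, so there is no single vanishing-mean potential to feed into (F1). I would instead use a whole family. For each $m$ and each $r\le-m$, the super-martingale property gives $\mathbb{E}[Y_{-m}|\mathcal{F}_r]\le Y_r$, so $P^{(m)}_r:=Y_r-\mathbb{E}[Y_{-m}|\mathcal{F}_r]$ is a non-negative super-martingale on $(-\infty,-m]$ with $P^{(m)}_{-m}=0$ and $\sup_{r\le-m}\mathbb{E}[P^{(m)}_r]=\alpha-\mathbb{E}[Y_{-m}]=:\gamma_m\to0$. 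The half-line form of \eqref{eq:I} then bounds $\mathbb{P}(\sup_{r\in\mathbb{Q},\,r\le-m}P^{(m)}_r>\delta)\le\gamma_m/\delta$. Meanwhile (F2), applied to the fixed variable $Y_{-m}$, shows that $\mathbb{E}[Y_{-m}|\mathcal{F}_r]$ converges a.s.\ as $r\to-\infty$, so that $\limsup_{r\to-\infty}Y_r-\liminf_{r\to-\infty}Y_r\le\sup_{r\le-m}P^{(m)}_r$ a.s. Since the left-hand side does not depend on $m$ while the right-hand side exceeds $\delta$ with probability at most $\gamma_m/\delta\to0$, the oscillation is $0$ a.s., so $Y_r$ converges a.s.\ as $r\to-\infty$ to a limit $Y'$ bounded by $c$, hence integrable.

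I expect the crux to be this left limit, where the missing vanishing mean is recovered by letting the reference time $-m$ recede to $-\infty$. The one genuinely non-elementary input there is the a.s.\ convergence, as $r\to-\infty$, of each reversed martingale $\mathbb{E}[Y_{-m}|\mathcal{F}_r]$, which is precisely where Lemma \ref{lemma} enters through (F2); the rest is bookkeeping with \eqref{eq:I} and the two facts above.
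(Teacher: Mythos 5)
Your proof is correct, and while the two arguments coincide at $+\infty$ they genuinely diverge at $-\infty$. For the right limit, your decomposition $Y_r=\mathbb{E}[Y|\mathcal{F}_r]+P_r$ with $\mathbb{E}[P_r]=\mathbb{E}[Y_r]-\mathbb{E}[Y]\to 0$ is only a repackaging of the paper's argument, which applies \eqref{eq:I} to the super-martingale $(Y_{r+k}-Y_k)_{r\ge 0}$ and lets $\mathbb{E}[|Y_k-Y|]\to 0$; same inputs (discrete convergence, \eqref{eq:I}, dominated convergence), same mechanism. For the left limit the paper builds an explicit Riesz-type decomposition: with $\Delta_k=Y_{-k-1}-\mathbb{E}[Y_{-k}|\mathcal{F}_{-k-1}]$ and $V_k=\sum_{n\ge 0}\Delta_{k+n}$ it shows $(Y_k+V_{-k})_{k\in\mathbb{Z}_-}$ is a reversed martingale, so $Y_{-k}=\mathbb{E}[Y_0+V_0|\mathcal{F}_{-k}]-V_k$ converges in $L^1$ (Lemma \ref{lemma} plus $\mathbb{E}[V_k]=l-\mathbb{E}[Y_{-k}]\to 0$), and then identifies the limit $Y'$ and runs a maximal inequality against $Y_k-Y'$. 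You avoid constructing $V_k$ altogether by using the family of anchored potentials $P^{(m)}_r=Y_r-\mathbb{E}[Y_{-m}|\mathcal{F}_r]$, whose uniform mean $\gamma_m=\alpha-\mathbb{E}[Y_{-m}]$ is exactly the paper's $\mathbb{E}[V_m]$, and you control only the oscillation of $Y_r$ rather than naming the limit. What your route buys is economy: no verification that $(Y_k+V_{-k})$ is a martingale and no integrability bookkeeping for $V_k$; what the paper's route buys is an explicit identification $Y'=\mathbb{E}[Y_0+V_0|\bigcap_k\mathcal{F}_{-k}]$ and $L^1$-convergence of $(Y_{-k})$ as a by-product. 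Both hinge on the same two nontrivial inputs, Lemma \ref{lemma} (which you invoke inside your fact (F2) to make $\mathbb{E}[Y_{-m}|\mathcal{F}_r]$ converge a.s.) and the half-line form of \eqref{eq:I}. Two cosmetic points: the constant you get from \eqref{eq:I} in (F1) is $3/\delta$ rather than $1/\delta$ (harmless), and in (F2) at $-\infty$ you should dominate $|\mathbb{E}[X|\mathcal{F}_r]-\mathbb{E}[X|\mathcal{F}_{-\infty}]|$ for $r\le -n$ by the reversed martingale $\mathbb{E}\bigl[|\mathbb{E}[X|\mathcal{F}_{-n}]-\mathbb{E}[X|\mathcal{F}_{-\infty}]|\,\big|\,\mathcal{F}_r\bigr]$, whose constant mean is the Lemma \ref{lemma} error, since the naive bound by $\mathbb{E}[|X-\mathbb{E}[X|\mathcal{F}_{-\infty}]|\,|\,\mathcal{F}_r]$ does not have vanishing mean; your sketch is consistent with the correct version but you should make the anchor at $-n$ explicit.
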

\begin{proof}

\begin{itemize}
\item First, we will prove the almost sure convergence at $+\infty.$
\\*
Since $\sup_{k \in \mathbb{N}}\mathbb{E}\left[\left|Y_k\right|\right] \leq c<+\infty,$ it follows from the discrete martingale convergence theorem that $\left(Y_k\right)_{k \in \mathbb{N}}$ converges a.s. to a random variable $Y$ such that $0 \leq Y \leq c$ a.s. 
\\*
To prove the result in its generality, we begin by fixing $k \in \mathbb{N}^*$ and $\delta \in \mathbb{R}^{*}_{+}.$ 
\\*
Noticing that $\left(Y_{r+k}-Y_k\right)_{r \in \mathbb{R}_+}$ is a super-martingale relative to $\left(\mathcal{F}_{r+k}\right)_{r \in \mathbb{R}_+}$ and applying inequality \eqref{eq:I}, we obtain that for all $q \in \mathbb{N}^*,$
\begin{align*}
\delta \mathbb{P}\left(\sup_{r \in \mathbb{Q} \cap \left[0,q\right]}\left|Y_{r+k}-Y\right|>\delta\right) & \leq \delta \mathbb{P}\left(\sup_{r \in \mathbb{Q} \cap \left[0,q\right]}\left|Y_{r+k}-Y_k\right|>\frac{\delta}{2}\right)+ \delta \mathbb{P}\left(\left|Y_k-Y\right|>\frac{\delta}{2}\right) \\
& \leq 4\mathbb{E}\left[\left|Y_{q+k}-Y_k\right|\right]+2\mathbb{E}\left[\left|Y_k-Y\right|\right] \\ 
& \leq 4\mathbb{E}\left[\left|Y_{q+k}-Y\right|\right]+6\mathbb{E}\left[\left|Y_k-Y\right|\right].
\end{align*}
We note that $\lim_{q \to +\infty} \mathbb{E}\left[\left|Y_{q+k}-Y\right|\right]=0$ from the dominated convergence theorem.
\\*
Hence for any $k \in \mathbb{N}^*$ and all $\delta \in \mathbb{R}^{*}_{+},$
\begin{equation*}
\delta \mathbb{P}\left(\sup_{r \in \mathbb{Q} \cap \left[k,+\infty\right[}\left|Y_r-Y\right|> \delta\right)=\mathbb{P}\left(\bigcup_{q \in \mathbb{N}^*} \left\{\sup_{r \in \mathbb{Q}\cap \left[0,q\right]}\left|Y_{r+k}-Y\right|>\delta\right\}\right)\leq 6\mathbb{E}\left[\left|Y_k-Y\right|\right].
\end{equation*}
Taking $k \to +\infty$ and applying the dominated convergence theorem, we conclude that for all $\delta \in \mathbb{R}^{*}_{+},$
\begin{equation*}
\lim_{k \to +\infty}\mathbb{P}\left(\sup_{r \in \mathbb{Q} \cap \left[k,+\infty\right[}\left|Y_r-Y\right|>\delta\right)=0,
\end{equation*}
in other words $\lim_{r \to +\infty,r \in \mathbb{Q}}Y_r=Y$ a.s.
\item The proof of the almost sure convergence at $-\infty$ is essentially the same as before, therefore it's sufficient to show that the sequence $\left(Y_{-k}\right)_{k \in \mathbb{N}}$ converges a.s. 

We will provide an elegant proof that doesn't use the up-crossing inequality.

For every $k \in \mathbb{N},$ let $\Delta_k:=Y_{-k-1}-\mathbb{E}\left[Y_{-k}|\mathcal{F}_{-k-1}\right]$ and $V_k:=\sup_{q \in \mathbb{N}}\left(\sum_{n=0}^q\Delta_{k+n}\right).$
We note that the sequence $\left(\mathbb{E}\left[Y_{-k}\right]\right)_{k \in \mathbb{N}}$ is non-decreasing and bounded above, hence it has a finite limit denoted by $l.$
\\*
Also for every $k \in \mathbb{N}, 0 \leq V_k \leq V_{k+1}$ a.s., therefore by monotone convergence theorem we have
\begin{equation*}
\forall k \in \mathbb{N}, \mathbb{E}\left[V_k\right]=\sum_{q \in \mathbb{N}}\mathbb{E}\left[\Delta_{q+k}\right]=l-\mathbb{E}\left[Y_{-k}\right]<+\infty.     
\end{equation*}
So $\left(V_k\right)_{k \in \mathbb{N}}$ is a sequence of integrable random variable such that $\lim_{k \to +\infty} \mathbb{E}\left[V_k\right]=0.$

Next, we check that $\left(Y_{k}+V_{-k}\right)_{k \in \mathbb{Z}_{-}}$ is an integrable martingale relative to $\left(\mathcal{F}_{k}\right)_{k \in \mathbb{Z}_{-}}.$ 
\\*
We fix $k \in \mathbb{Z}_{-}. 
\\*
Y_{k}+V_{-k}$ is $\mathcal{F}_{k}$-measurable and integrable. We also have
\begin{align*}
\forall G \in \mathcal{F}_{k-1},\int_G\!Y_{k}\,\mathrm{d}\mathbb{P}+\int_G\!V_{-k}\,\mathrm{d}\mathbb{P} & =\int_G\!\mathbb{E}\left[Y_{k}|\mathcal{F}_{k-1}\right]\,\mathrm{d}\mathbb{P}+\sum_{q \in \mathbb{N}}\int_G\!\Delta_{q-k}\,\mathrm{d}\mathbb{P} \\
& =\int_G\!Y_{k-1}\,\mathrm{d}\mathbb{P}-\int_G\!\Delta_{-k}\,\mathrm{d}\mathbb{P}+ \sum_{q \in \mathbb{N}}\int_G\!\Delta_{q-k}\,\mathrm{d}\mathbb{P} \\
& =\int_G\!Y_{k-1}\,\mathrm{d}\mathbb{P}+ \sum_{q \in \mathbb{N}}\int_G\!\Delta_{q+1-k}\,\mathrm{d}\mathbb{P} \\
& =\int_G\!Y_{k-1}\,\mathrm{d}\mathbb{P}+\int_G\!V_{1-k}\,\mathrm{d}\mathbb{P}.
\end{align*}
Consequently, $\left(Y_{k}+V_{-k}\right)_{k \in \mathbb{Z}_{-}}$ is a martingale and hence for all $k \in \mathbb{N},Y_{-k}=\mathbb{E}\left[Y_{0}+V_0|\mathcal{F}_{-k}\right]-V_k$ a.s. 
\\*
It follows from Lemma \ref{lemma} that $\left(Y_{-k}\right)_{k \in \mathbb{N}}$ converges to $Y':=\mathbb{E}\left[Y_{0}+V_0|\bigcap_{k \in \mathbb{N}}\mathcal{F}_{-k}\right]$ in $L^1.$ 

Lastly, noticing that $\left(Y_{k}-Y'\right)_{k \in \mathbb{Z}_{-}}$ is a super-martingale relative to $\left(\mathcal{F}_{k}\right)_{k \in \mathbb{Z_{-}}}$ and using the discrete version of inequality \eqref{eq:I}, we have for every $\delta \in \mathbb{R}^{*}_{+}$ and every $(q,n) \in \mathbb{N}^2,$
\begin{equation*}
\delta \mathbb{P}\left(\max_{0 \leq k \leq q}\left|Y_{-k-n}-Y'\right|>\delta\right) \leq \mathbb{E}\left[\left|Y_{-q-n}-Y'\right|\right]+ 2\mathbb{E}\left[\left|Y_{-n}-Y'\right|\right].
\end{equation*}
Letting $q \to +\infty$ we obtain that for any $\delta \in \mathbb{R}^{*}_{+}$ and all $n \in \mathbb{N},$
\begin{equation*}
\mathbb{P}\left(\sup_{k \in \mathbb{N}}\left|Y_{-k-n}-Y'\right|>\delta\right) \leq \frac{2}{\delta}\mathbb{E}\left[\left|Y_{-n}-Y'\right|\right].
\end{equation*}
So for every $\delta \in \mathbb{R}^{*}_{+},\lim_{n \to +\infty}\mathbb{P} \left(\sup_{k \in \mathbb{N}}\left|Y_{-k-n}-Y'\right|>\delta \right)=0,$ concluding the proof.
\end{itemize}
\end{proof}

We end this section by stating and proving a general version of Theorem \ref{th:uniform}.

\begin{theorem}
\label{th:positif}
If $\left(Y_r\right)_{r \in \mathbb{R}}$ is a non-negative super-martingale, then the limits $\lim_{r \to +\infty,r \in \mathbb{Q}}Y_r$ and $\lim_{r\to -\infty,r \in \mathbb{Q}}Y_r$ exist a.s. in $\overline{\mathbb{R}}_+.$
\\*
In particular, if the sample paths of $(Y_r)_{r \in \mathbb{R}}$ are right-continuous, then $\lim_{r \to +\infty}Y_r$ and $\lim_{r \to -\infty}Y_r$ exist a.s. in $\overline{\mathbb{R}}_+.$
\end{theorem}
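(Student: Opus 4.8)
The plan is to reduce Theorem \ref{th:positif} to the bounded case already settled in Theorem \ref{th:uniform} by a truncation argument, exactly as announced in the abstract. For each integer $c \in \mathbb{N}$ I would introduce the truncated process $Y^{(c)}_r := Y_r \wedge c$. Since the map $\phi_c : x \mapsto x \wedge c$ is concave and non-decreasing, the conditional Jensen inequality gives, for all $s \ge r$,
\begin{equation*}
\mathbb{E}\left[\phi_c(Y_s) \mid \mathcal{F}_r\right] \le \phi_c\left(\mathbb{E}\left[Y_s \mid \mathcal{F}_r\right]\right) \le \phi_c(Y_r),
\end{equation*}
where the last inequality combines the super-martingale property $\mathbb{E}[Y_s \mid \mathcal{F}_r] \le Y_r$ with the monotonicity of $\phi_c$. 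Hence $\left(Y^{(c)}_r\right)_{r \in \mathbb{R}}$ is an integrable super-martingale with $0 \le Y^{(c)}_r \le c$, and Theorem \ref{th:uniform} yields that $\lim_{r \to +\infty, r \in \mathbb{Q}} Y^{(c)}_r$ and $\lim_{r \to -\infty, r \in \mathbb{Q}} Y^{(c)}_r$ exist almost surely.

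Next I would recombine the truncations. Writing $L := \liminf_{r \to +\infty, r \in \mathbb{Q}} Y_r$ and $M := \limsup_{r \to +\infty, r \in \mathbb{Q}} Y_r$ in $\overline{\mathbb{R}}_+$, the fact that $\phi_c$ is continuous and non-decreasing (also at the value $+\infty$) gives $\liminf_{r \to +\infty, r \in \mathbb{Q}} \left(Y_r \wedge c\right) = L \wedge c$ and $\limsup_{r \to +\infty, r \in \mathbb{Q}} \left(Y_r \wedge c\right) = M \wedge c$. Since $Y^{(c)}$ converges along the rationals by the previous step, it follows that $L \wedge c = M \wedge c$ almost surely for each fixed $c$. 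Pooling the countable union of the corresponding null sets over $c \in \mathbb{N}$ and letting $c \to +\infty$, I obtain $L = M$ almost surely, that is, $\lim_{r \to +\infty, r \in \mathbb{Q}} Y_r$ exists a.s. in $\overline{\mathbb{R}}_+$. The identical argument applied at $-\infty$ settles the convergence of $\lim_{r \to -\infty, r \in \mathbb{Q}} Y_r$.

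For the last assertion I would transfer the rational limits to genuine real limits using right-continuity. Fix a path in the almost sure event on which the sample path is right-continuous and both rational limits exist, denoting them $Y$ and $Y'$. Given any real sequence $t_n \to +\infty$, right-continuity at each $t_n$ lets me choose a rational $q_n \in \left(t_n, t_n + 1\right)$ with $\left|Y_{q_n} - Y_{t_n}\right| \le 1/n$; since $q_n \to +\infty$ we have $Y_{q_n} \to Y$ along $\mathbb{Q}$, whence $Y_{t_n} \to Y$ (this remains valid when $Y = +\infty$). As $(t_n)$ is arbitrary, $\lim_{r \to +\infty} Y_r = Y$; the same construction with $t_n \to -\infty$, for which $q_n < t_n + 1 \to -\infty$, gives $\lim_{r \to -\infty} Y_r = Y'$.

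The step I expect to require the most care is the recombination in the second paragraph: one must verify that $\phi_c$ commutes with both $\liminf$ and $\limsup$ — legitimate precisely because $\phi_c$ is continuous and non-decreasing on $\overline{\mathbb{R}}_+$ — and one must take the truncation level in the countable set $\mathbb{N}$ so that the exceptional null sets may be discarded simultaneously before sending $c \to +\infty$. By contrast, checking that $Y^{(c)}$ is a super-martingale via conditional Jensen is routine, but it deserves to be stated explicitly since it is exactly what makes Theorem \ref{th:uniform} applicable.
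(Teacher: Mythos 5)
Your proposal is correct and follows essentially the same route as the paper: truncate at integer levels $c$, apply Theorem \ref{th:uniform} to the bounded non-negative super-martingale $\left(Y_r \wedge c\right)_{r \in \mathbb{R}}$, use the commutation of $x \mapsto x \wedge c$ with $\liminf$ and $\limsup$, and let $c \to +\infty$ over a countable family of null sets. The only difference is that you spell out the conditional Jensen verification of the super-martingale property of the truncation and the transfer to real limits via right-continuity, both of which the paper leaves implicit.
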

\begin{proof}
We will only prove the almost sure existence of the limit at $+\infty,$ the proof is analogous at $-\infty$.

The idea is to truncate properly so that the super-martingale property is preserved. 
\\*
We fix $q \in \mathbb{N}.$ 
\\*
$\left(\min\left(q,Y_r\right)\right)_{r \in \mathbb{R}}$ is a non-negative super-martingale uniformly bounded by $q.$ We deduce from Theorem \ref{th:uniform} that $\limsup_{r \to +\infty, r\in \mathbb{Q}}\min\left(q,Y_r\right)=\liminf_{r \to +\infty, r \in \mathbb{Q}}\min\left(q,Y_r\right)$ a.s. 
\\*
We also have the following relations: 
\begin{equation*}
\min\left(q,\limsup_{r \to +\infty, r \in \mathbb{Q}}Y_r\right)=\limsup_{r \to+ \infty, r \in \mathbb{Q}}\min\left(q,Y_r\right),    
\end{equation*}
\begin{equation*}
\min\left(q, \liminf_{r \to +\infty, r \in \mathbb{Q}}Y_r\right)=\liminf_{r \to +\infty, r \in \mathbb{Q}}\min\left(q,Y_r\right).
\end{equation*}
So for every $q \in \mathbb{N},\min\left(q,\limsup_{r \to +\infty, r \in \mathbb{Q}}Y_r\right) = \min\left(q, \liminf_{r \to +\infty, r \in \mathbb{Q}}Y_r\right)$ a.s. Hence $\limsup_{r \to +\infty, r \in \mathbb{Q}}Y_r=\liminf_{r \to +\infty, r \in \mathbb{Q}}Y_r$ a.s., yielding that $\lim_{r \to +\infty, r \in \mathbb{Q}}Y_r$ exists a.s. in $\overline{\mathbb{R}}_+.$

If $\left(Y_r\right)_{r \in \mathbb{R}}$ is right-continuous, then $\lim_{r \to +\infty}Y_r=\lim_{r \to +\infty, r \in \mathbb{Q}}Y_r$ a.s. 
\end{proof}

\begin{remark}
Theorem \ref{th:positif} holds true if $\mathbb{Q}$ is replaced by a countable dense subset $D$ of $\mathbb{R}.$
\end{remark}

\section{Proof of the continuous martingale convergence theorem}

Finally, we are ready to prove our theorem. 

\begin{proof}[Proof of Theorem \ref{th:convergence}]
Since the sample paths of $(X_r)_{r \in \mathbb{R}}$ are right-continuous, it's sufficient to prove that the limits of $\left(X_r\right)_{r \in \mathbb{Q}}$ at $+\infty$ and $-\infty$ exist almost surely.
\begin{enumerate}
    \item Applying Theorem \ref{th:Krickeberg}, there exist a non-negative martingale $\left(U_{r}\right)_{r \in \mathbb{R}_+}$ and a non-negative super-martingale $\left(W_{r}\right)_{r \in \mathbb{R}_+}$ such that $\sup_{r \in \mathbb{R}_+}\mathbb{E}\left[\left|U_r\right|\right]<+\infty, \sup_{r \in \mathbb{R}_{+}}\mathbb{E}\left[\left|W_r\right|\right]<+\infty,$ and for every $r \in \mathbb{R}_+, X_r=U_r-W_r.$
    \\*
    Theorem \ref{th:positif} yields that $\lim_{r \to +\infty,r \in \mathbb{Q}}U_r$ and $\lim_{r \to +\infty,r\in \mathbb{Q}}W_r$ exist a.s. in $\overline{\mathbb{R}}_+,$ we denote these almost sure limits by $U$ and $W,$ respectively. 
    \\*
    It follows by Fatou's lemma that $U$ and $W$ are integrable, in particular they are finite a.s. and hence $\lim_{r \to +\infty,r \in \mathbb{Q}}X_r=U-W \in L^1$ a.s.
    \item To verify the result, we need to write $X_r$ suitably. 
    \\*
    We note that $\left(\mathbb{E}\left[X_0^+|\mathcal{F}_r\right]\right)_{r \in \mathbb{R}_-}$ is an integrable martingale such that for all $r \in \mathbb{R}_-,\mathbb{E}\left[X_0^+|\mathcal{F}_r\right] \geq \mathbb{E}\left[X_0|\mathcal{F}_r\right] \geq X_r$ a.s., so $\left(\mathbb{E}\left[X_0^+|\mathcal{F}_r\right]-X_r\right)_{r \in \mathbb{R}_-}$ is a non-negative super-martingale. 
    \\*
    Applying again Theorem \ref{th:positif} and since $X_0^+ \in L^1,$ the limits $\lim_{r \to -\infty,r \in \mathbb{Q}}\mathbb{E}\left[X_0^+|\mathcal{F}_r\right]$ and $\lim_{r \to -\infty,r \in \mathbb{Q}}\left(\mathbb{E}\left[X_0^+|\mathcal{F}_r\right]-X_r\right)$ exist a.s. in $\mathbb{R}$ and $\overline{\mathbb{R}}_+,$ respectively. 
    \\*
    By writing for every $r \in \mathbb{R}_-, X_r=\mathbb{E}\left[X_0^+|\mathcal{F}_r\right]-\left(\mathbb{E}\left[X_0^+|\mathcal{F}_r\right]-X_r\right),$ we conclude that $\lim_{r \to -\infty,r \in \mathbb{Q}}X_r$ exists a.s. in $\left[-\infty,+\infty\right[.$  
\end{enumerate}
\end{proof}
\begin{remark}
Following the procedure used in proving Theorem \ref{th:convergence} we can show that if $u \in \mathbb{R}, D$ is a countable dense subset of $\mathbb{R},$ and $(Y_r)_{r \in \mathbb{R}}$ is an integrable sub-martingale, then there exist integrable random variables $V$ and $V'$ such that $\lim_{r \downarrow u, r \in D}Y_r=V$ a.s., $\lim_{r \uparrow u, r \in D}Y_r=V'$ a.s., $\mathbb{E}\left[V|\mathcal{F}_u\right] \geq Y_u$ a.s., and $\mathbb{E}\left[Y_u|\mathcal{F}_{u-}\right] \geq V'$ a.s. (see \cite{bib8} for another proof applying the up-crossing inequality). 
\end{remark}

\emph{Department of Mathematics and Statistics, University of Ottawa, Ottawa, K1N 6N5, Canada}

\textit{Email}: jghaf099@uottawa.ca

\end{document}